\tikzstyle{morphism}=[fill=white, draw=black, shape=circle, inner sep=0pt, text width=15pt, align=center]
\tikzstyle{copy}=[fill=black, draw=black, shape=circle, inner sep=0pt, minimum size=5pt]
\tikzstyle{midarrow}=[-, midarrow]
\tikzstyle{dashedarrow}=[-, dashed, midarrow, draw=black]
\tikzstyle{midarrowdashed}=[-, dashed, midarrow, draw=black]
\tikzstyle{onlydashed}=[-, dashed]
\newcommand{\C}{\mathscr{C}}
\newcommand{\D}{\mathscr{D}}
\newcommand{\SymMon}{\mathbf{SymMon}}
\newcommand{\Comp}{\mathbf{Comp}}
\DeclareMathOperator{\Core}{Core}
\newcommand{\Optic}{\mathbf{Optic}}
\newcommand{\Lens}{\mathbf{Lens}}
\newcommand{\Para}{\mathbf{Para}}
\newcommand{\Learn}{\mathbf{Learn}}
\newcommand{\IntLearn}{\mathbf{IntLearn}}
\newcommand{\Atemp}{\mathbf{Atemp}}
\newcommand{\hto}{\ensuremath{\,\mathaccent\shortmid\rightarrow\,}}
\title{Learners are Almost Free Compact Closed}
\author{Mitchell Riley
\institute{Mathematics, Division of Science}
\institute{New York University Abu Dhabi}
\email{mitchell.v.riley@nyu.edu}
\thanks{
The author acknowledges support by {\it Tamkeen} under {\it NYUAD Research Institute grant} {\tt CG008}.
}
}
\begin{document}
\maketitle

\begin{abstract}
  The category of learners has a pleasant symmetric formulation when
  the morphisms are considered up to a coarser equivalence than the
  one originally described in the paper "Backprop as Functor". A
  quotient of this modified category gives a new construction of the
  free compact closed category on a symmetric monoidal category.
\end{abstract}

In this note I expand on an observation made at the end
of~\cite{riley:optics} and further discussed
online~\cite{riley:zulip-involution}. If the definition of a
learner~\cite{fst:backprop-as-functor} is tweaked slightly so that
learners are considered identical under a coarser equivalence relation
than before, a striking symmetry in the definition is revealed.

This new category comes quite close to being compact closed. There is
a strict symmetric monoidal involution and each object has a cup and
cap morphism connecting it to its ``dual'' under this involution.
However, the snake equations fail and so these are not true duals.
Quotienting the category to force the equations to hold results in a
compact closed category, and we end the note by showing that it is in
fact the \emph{free} compact closed category on the starting symmetric
monoidal category.

Compact closed categories support a more powerful diagrammatic
calculus than bare symmetric monoidal categories, so this work may
make it easier to design and reason about learners compositionally.
The operation that dualises a learner does not appear to have been
previously noticed and there is much to investigate. As asked by David
Spivak, what is the dual of a neuron?

\section{Extensional Learners}

Learners are a categorical construction used in a compositional
approach to supervised learning and
backpropagation~\cite{fst:backprop-as-functor,
  cggwz:categorical-learning}. We first recall their definition.

\begin{definition}[{\cite[Definition 2.1]{fst:backprop-as-functor}}]
  For sets $A$ and $B$, a \emph{learner} $A \hto B$ is a ``parameter'' set $P$
  together with a triple of functions $I : P \times A \to B$ and
  $U : P \times A \times B \to P$ and $r : P \times A \times B \to A$,
  considered up to isomorphism of parameter sets. That is, two learners
  $(P, I, U, r)$ and $(P', I', U', r')$ are identified whenever there
  is a bijection $f : P \to P'$ with
\begin{alignat*}{2}
&I'(f(p), a) &&= I(p, a) \\
&U'(f(p), a, b) &&= f(U(p, a, b)) \\
&r'(f(p), a, b) &&= r(p, a, b)
\end{alignat*}
\end{definition}

Learners expose their parameter sets $P$ to the outside world (up to isomorphism), and two
learners with non-isomorphic parameter sets are considered different
even if they `behave the same' on all input data $A$ and $B$. Cribbing
some terminology from type theory, we will label these learners
\emph{intensional}: their identity depends on the specific choice of
parameter set even if it is not observable through their input-output
behaviour. (Unfortunately this clashes with the meaning of
intensional/extensional as used in
\cite{hefford-comfort:coend-quantum-combs}.)

The equivalence relation on learners has a clear generalisation to any
category with finite products, and in this generality:

\begin{proposition}[{\cite[Proposition 2.4]{fst:backprop-as-functor}}]
  Intensional learners in any category with finite products form a
  symmetric monoidal category $\IntLearn_\C$. \qed{}
\end{proposition}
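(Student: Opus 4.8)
The plan is to exhibit all of the data of a symmetric monoidal category and then verify the axioms, working with generalised elements so that the set-level formulas of the definition transcribe directly into a general $\C$ with finite products (the ``bijections'' of the equivalence relation becoming isomorphisms of parameter objects, and the equations becoming commuting diagrams). The objects of $\IntLearn_\C$ are the objects of $\C$, and a morphism $A \to B$ is an equivalence class of learners $(P, I, U, r)$. The identity on $A$ is the learner with terminal parameter object $1$, implementation the projection $1 \times A \to A$, and request the projection $1 \times A \times A \to A$ onto the final factor (the update map being forced to be the unique map into $1$). Composition of $(P, I, U, r) : A \hto B$ with $(Q, J, V, s) : B \hto C$ has parameter $P \times Q$ and follows the backpropagation pattern: the implementation is $(p,q,a) \mapsto J(q, I(p,a))$; writing $b := I(p,a)$ for the forward value, the outer learner produces $s(q,b,c)$, which is fed back into the inner learner, so that the composite request is $r(p, a, s(q,b,c))$ and the composite update is $(U(p,a,s(q,b,c)),\, V(q,b,c))$.

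First I would check that composition is well-defined on equivalence classes: if $f : P \to P'$ witnesses an equivalence of the first factor and $g : Q \to Q'$ of the second, then $f \times g$ witnesses an equivalence of the composites, which is a direct substitution into the defining equations. Next I would verify the category axioms. The unit laws hold because composing with the identity attaches a terminal factor to the parameter object, and the canonical isomorphisms $1 \times P \cong P \cong P \times 1$ supply exactly the isomorphisms demanded by the equivalence relation; one checks that under these the composite maps reduce to the originals, using that the identity's request is a projection and its implementation is the identity on $A$. Associativity is handled the same way: the two bracketings of a triple composite have parameter objects $(P \times Q) \times R$ and $P \times (Q \times R)$, and the associator of the cartesian structure of $\C$ is the witnessing isomorphism.

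For the monoidal structure I would set $A \otimes B := A \times B$ on objects, with unit the terminal object $1$, and on morphisms take the parallel product: the product of $(P, I, U, r)$ and $(P', I', U', r')$ has parameter $P \times P'$ and acts componentwise. The associator, unitors, and symmetry $\sigma$ are all learners with terminal parameter whose implementation and request maps are built from the corresponding structural isomorphisms of $(\C, \times, 1)$. Functoriality of $\otimes$, naturality of these isomorphisms, the coherence conditions (pentagon, triangle, the hexagon identities), and $\sigma \circ \sigma = \mathrm{id}$ then all reduce to the coherence of the cartesian monoidal structure on $\C$, holding up to the parameter-isomorphisms allowed by the equivalence relation.

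I expect the main obstacle to be associativity of composition together with the interchange law relating composition and $\otimes$, because of the nested backpropagation in the update and request maps: verifying these requires tracking which forward-pass value $b = I(p,a)$ is consumed at each stage and confirming that the same value appears on both sides of every equation. Once the shared forward values are identified, each remaining check collapses into an instance of the universal property of finite products in $\C$.
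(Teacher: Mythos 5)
Your proposal is correct and is essentially the proof of the cited source (Fong--Spivak--Tuy\'eras, Proposition 2.4), which the paper invokes without reproving: the same backpropagation composite $\bigl(U(p,a,s(q,b,c)),\,V(q,b,c)\bigr)$ with $b = I(p,a)$, the same trivial-parameter identity and structural learners, and the same product-of-parameters tensor, with the canonical isomorphisms of parameter objects witnessing all unit, associativity, and coherence laws. Nothing to flag.
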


The data underlying a learner $A \hto B$ with fixed parameter set $P$
has an appealing symmetric description, via the chain of isomorphisms
\begin{align*}
&\C(P \times A, B) \times \C(P \times A \times B, P) \times \C(P \times A \times B, A) \\
\cong{} &\C(P \times A, B) \times \C(P \times A \times B, P \times A) \\
\cong{} &\int^{Q : \C} \C(P \times A, Q) \times \C(P \times A, B) \times \C(Q \times B, P \times A) \\
\cong{} &\int^{Q : \C} \C(P \times A, Q \times B) \times \C(Q \times B, P \times A)
\end{align*}
The quotient up to isomorphism of $P$ can then be expressed as an
additional coend over the core of $\C$:
\begin{align*}
  \IntLearn(A, B) = \int^{P : \Core(\C)} \int^{Q : \C} \C(P \times A, Q \times B) \times \C(Q \times B, P \times A)
\end{align*}
Three generalisations immediately suggest themselves:
\begin{itemize}
\item replace the former coend with one over all of $\C$ rather than
  only the isomorphisms,
\item replace $\times$ with an arbitrary symmetric monoidal product
  $\otimes$, and,
\item allow the repeat occurrences of $A$ and $B$ to be different
  objects, as in the category of optics~\cite{profunctor:update,
    riley:optics}.
\end{itemize}
We arrive at our generalised definition of learner.
\begin{definition}
  For objects $A, A', B, B'$ of a symmetric monoidal category $\C$, an
  \emph{extensional learner} $(A, A') \hto (B, B')$ is an element of
  \begin{align*}
    \Learn_\C((A, A'), (B, B')) := \int^{P, Q : \C} \C(P \otimes A, Q \otimes B) \times \C(Q \otimes B', P \otimes A')
  \end{align*}
  The extensional learner represented by a pair of maps
  $f : P \otimes A \to Q \otimes B$ and
  $g : Q \otimes B' \to P \otimes A'$ will be written
  $(f \mid g) : (A, A') \hto (B, B')$.
\end{definition}
Because these are our focus, we omit `extensional' and call these
simply \emph{learners}.

When specialised back to the case of sets and binary products, this
definition unwinds to a coarser notion of equivalence between a pair
of intensional learners. Two learners $(P, I, U, r)$ and
$(P', I', U', r')$ are identified by this new definition when there is
a (not-necessarily-bijective) function $f : P \to P'$ and an
intermediate function $\hat{U} : P' \times A \times B \to P$ such that
\begin{alignat*}{2}
  &I'(f(p), a) &&= I(p, a) \\
  &r'(f(p), a, b) &&= r(p, a, b) \\
  &\hat{U}(f(p), a, b) &&= U(p, a, b) \\
  &U'(p', a, b) &&= f(\hat{U}(p', a, b))
\end{alignat*}

That is, the relationship of $f$ to $I$ and $r$ is the same as before,
but for $U$ we require a diagonal filler in the diagram
\begin{center}
  \begin{tikzcd}
    P \times A \times B \ar[r, "U"] \ar[d, "f \times A \times B" swap] & P \ar[d, "f"] \\
    P' \times A \times B \ar[r, "{U'}" swap] \ar[ur, dashed, "{\hat{U}}"] & P'
  \end{tikzcd}
\end{center}

\begin{lemma}\label{lem:int-are-ext}
  Intensionally equivalent learners are extensionally equivalent.
\end{lemma}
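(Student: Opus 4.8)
The plan is to take the data witnessing intensional equivalence and repackage it directly as data witnessing extensional equivalence, using the explicit description of the latter established just above the statement. Suppose $(P, I, U, r)$ and $(P', I', U', r')$ are intensionally equivalent, witnessed by a bijection $f : P \to P'$ satisfying the three intensional equations. My claim is that the \emph{same} function $f$, together with the intermediate map
\[
  \hat U(p', a, b) := f^{-1}\bigl(U'(p', a, b)\bigr),
\]
witnesses extensional equivalence. Here I use crucially that $f$, being a bijection, has an inverse $f^{-1} : P' \to P$ to define the filler.

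Next I would verify the four extensional conditions in turn. The conditions on $I$ and $r$ are literally the intensional conditions on $I$ and $r$, so they hold unchanged. The condition $U'(p', a, b) = f(\hat U(p', a, b))$ is immediate, since $f \circ f^{-1} = \mathrm{id}_{P'}$. The remaining condition $\hat U(f(p), a, b) = U(p, a, b)$ is the only one that uses the intensional hypothesis on $U$: unfolding the definition of $\hat U$ gives $f^{-1}(U'(f(p), a, b))$, and the intensional equation $U'(f(p), a, b) = f(U(p, a, b))$ rewrites this as $f^{-1}(f(U(p, a, b))) = U(p, a, b)$ using $f^{-1} \circ f = \mathrm{id}_P$.

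There is no genuine obstacle here; the whole content of the lemma is that the coarser relation contains the finer one, and the single idea is that an invertible $f$ supplies the diagonal filler $\hat U$ for free. The one point that demands attention is that the intensional update equation is \emph{directional} --- $f$ conjugates $U$ into $U'$ in one direction only --- so the filler must be assembled from $U'$ and $f^{-1}$ rather than from $U$ and $f$, and this is precisely where bijectivity of $f$ enters. Although I have phrased everything for $\Set$, each step is a plain equation between composites, so it transports verbatim to any category with finite products: the same $f$ and $\hat U := f^{-1} \circ U'$ provide the data identifying the two representatives in the defining coend of $\Learn_\C$.
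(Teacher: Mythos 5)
Your proof is correct and takes essentially the same approach as the paper: both use bijectivity of $f$ to supply the diagonal filler, the paper defining $\hat U(p',a,b) := U(f^{-1}(p'),a,b)$ where you define $\hat U := f^{-1}\circ U'$ --- two formulas that agree by the intensional equation on $U$, with the only difference being which of the two filler conditions invokes that equation.
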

\begin{proof}
  The bijection $f : P \to P'$ satisfies the first two equations of
  extensional equivalence by assumption. For the intermediate function
  $\hat{U} : P' \times A \times B \to P$, take
  $\hat{U}(p', a, b) := U(f^{-1}(p'), a, b)$, then
  \begin{align*}
    \hat{U}(f(p), a, b) &= U(f^{-1}(f(p)), a, b) = U(p, a, b) \\
    f(\hat{U}(p', a, b)) &= f(U(f^{-1}(p'), a, b)) = U'(f(f^{-1}(p')), a, b) = U'(p', a, b)
  \end{align*}
\end{proof}

\begin{remark}
  The extensional relation is similar to the notion of 2-morphism between
  learners given in \cite[Definition 7.1]{fst:backprop-as-functor},
  which only requires that the outer square in the above diagram
  commutes. The intensional relation identifies two learners when
  there is a 2-isomorphism between them.

  In~\cite{fong-johnson:lenses-learners}, Fong and Johnson consider
  two intensional learners identical when there is a 2-morphism such
  that $f$ is only \emph{surjective}, and not necessarily a bijection.
  This is a coarser notion of equivalence still: for example, it
  identifies all intensional learners $1 \hto 1$.
\end{remark}

\begin{proposition}
  Learners form a category $\Learn_\C$, where the identity learner
  $(A, A') \hto (A, A')$ is
  $(\id_{I \otimes A} \mid \id_{I \otimes A'})$, and the composite of
  $(l_1 \mid r_1) : (A, A') \hto (B, B')$ and
  $(l_2 \mid r_2) : (B, B') \hto (C, C')$ with representatives
  \begin{align*}
    l_1 &: P_1 \otimes A \to Q_1 \otimes B \\
    r_1 &: Q_1 \otimes B' \to P_1 \otimes A' \\
    l_2 &: P_2 \otimes B \to Q_2 \otimes C \\
    r_2 &: Q_2 \otimes C' \to P_2 \otimes B'
  \end{align*}
  is given by
  \begin{center}
    \tikzfig{diagrams/comp-left} \quad \tikzfig{diagrams/comp-right}.
  \end{center}
\end{proposition}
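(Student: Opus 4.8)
The plan is to verify the three category axioms directly: that the stated composition is \emph{well-defined} on coend classes, that it satisfies the \emph{unit laws}, and that it is \emph{associative}. Throughout I will read the two string diagrams above as the chosen wiring of symmetries, associators and unitors that threads the parameter objects past one another: the composite of $(l_1 \mid r_1)$ and $(l_2 \mid r_2)$ is represented on parameters $P_2 \otimes P_1$ and $Q_2 \otimes Q_1$, with its left leg assembled from $l_1$ followed by $l_2$ and its right leg from $r_2$ followed by $r_1$, the interleaving symmetries being forced by the domains and codomains.

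The first step is to make the coend relation explicit, since every axiom will be reduced to it. Unwinding the dinaturality of $\int^{P,Q}$, a learner is unchanged when a reparametrisation is slid from one leg to the other: for $h : P \to P'$, $k : Q \to Q'$ and representatives $f : P' \otimes A \to Q \otimes B$, $g : Q' \otimes B' \to P \otimes A'$, one has
\[ (f \circ (h \otimes A) \mid g \circ (k \otimes B')) \;=\; ((k \otimes B) \circ f \mid (h \otimes A') \circ g) \]
as elements of $\Learn_\C((A,A'),(B,B'))$. Here the left-hand representative lives on parameters $(P,Q)$ and the right-hand one on $(P',Q')$, so this is exactly the statement that $(h,k)$ reparametrises one to the other.

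Well-definedness is where I expect the real work to lie, and it is the main obstacle. The key observation is that a reparametrisation of a single factor becomes a reparametrisation of the whole composite: applying the relation above to the first factor via $(h,k)$ transforms the composite into the one reparametrised by $(\id_{P_2} \otimes h,\, \id_{Q_2} \otimes k)$, by functoriality of $\otimes$ together with the naturality of the symmetry maps appearing in the wiring; symmetrically, reparametrising the second factor by $(h',k')$ yields $(h' \otimes \id_{P_1},\, k' \otimes \id_{Q_1})$. Thus the factor-level identifications are carried to composite-level identifications, and the assignment descends to coend classes. The delicate point is purely bookkeeping — checking that every symmetry and associator in the chosen wiring is natural in the $P_i$ and $Q_i$, so that the parameter wires genuinely slide through — and string diagrams make this transparent, since sliding a parameter wire past a box is just planar isotopy.

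The remaining two axioms then fall out of the same relation. For the unit laws, composing with $(\id_{I \otimes A} \mid \id_{I \otimes A'})$ sets one pair of parameters to $I$, and the resulting representative is the original one pre- and post-composed with the unit coherence isomorphisms; this is precisely the sliding relation with $h,k$ taken to be those unitors, so the class is unchanged. For associativity, the two bracketings of a triple composite produce representatives on $(P_3 \otimes P_2) \otimes P_1$ versus $P_3 \otimes (P_2 \otimes P_1)$, and likewise for the $Q_i$, whose wirings differ only by associators and the naturality squares of the symmetry; these differences are again instances of the sliding relation, so the two classes coincide, with coherence of the symmetric monoidal structure guaranteeing that the particular rearrangements chosen in the diagrams agree.
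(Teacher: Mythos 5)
Your proposal is correct and takes essentially the same route as the paper: both reduce everything to the coend ``sliding'' relation, obtaining the unit laws from the unitors of $\C$ and associativity from string-diagram isotopy plus sliding the associators of the parameter objects $(P_1,P_2,P_3)$ and $(Q_1,Q_2,Q_3)$ across the gap. The only difference is one of emphasis: you spell out well-definedness of composition on coend classes explicitly, whereas the paper delegates that step to the analogous proposition for optics.
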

\begin{proof}
  Similar to \cite[Proposition 2.0.3]{riley:optics}. The unit laws for
  composition follow quickly from the unit laws in $\C$. Associativity
  of composition follows (for the left component) from the isotopy
  of the following string diagrams together with associativity of $\C$. 
  \begin{center}
    \tikzfig{diagrams/comp-assoc1-left} \quad \tikzfig{diagrams/comp-assoc2-left}
  \end{center}
  The right component is symmetric. The coend relation ensures that
  the associativity morphisms in $\C$ for the triples
  $(P_1, P_2, P_3)$ and $(Q_1, Q_2, Q_3)$ can be used without changing
  the identity of the learner, as each such morphism is paired with
  its inverse across the gap.
\end{proof}

\begin{remark}
  Intensional learners manifestly do not form a locally small
  category: any choice of parameter set $P$ determines a learner
  $(1, 1) \hto (1, 1)$ with $U$ given by projection, and any two sets
  with different cardinalities determine two intensionally
  inequivalent learners. The situation is not improved with the
  extensional relation: these learners remain unequal and $\Learn_\C$
  is not locally small. (Compare with ordinary lenses, which can be
  described in a similar coend style but always form a locally small
  category.) For a symmetric monoidal category, the collection of
  learners $(I, I) \hto (I, I)$ is the \emph{trace} of $\C$ in the
  sense of~\cite{faro:trace}, which can be difficult to calculate even
  in simple cases.
\end{remark}

The category $\Learn_\C$ accepts an obvious functor
$\iota : \C \to \Learn_\C$, sending $f : A \to B$ to
$(I \otimes f \mid \id_{I \otimes I}) : (A, I) \hto (B, I)$. And, it
also accepts a functor from $\Optic_\C$, which sends an optic
$\langle l \mid r \rangle : (A, A') \hto (B, B')$ to the learner with
components
\begin{align*}
  I \otimes A &\to A \xrightarrow{l} M \otimes B \\
  M \otimes B' &\xrightarrow{r} A' \to I \otimes A'
\end{align*}
The observation that lenses include into learners as those with
trivial parameter set was made in \cite{fst:backprop-as-functor}, and
this fact is made especially clear by the characterisation
$\IntLearn_\C \cong \Para(\Lens_\C)$ given in
\cite{cggwz:categorical-learning}.

\begin{proposition}
  $\Learn_\C$ is symmetric monoidal, with action on objects given by
  $(A, A') \otimes (B, B') := (A \otimes B, B' \otimes A')$ and action
  on morphisms
  \begin{align*}
   (l_L \mid r_L) \otimes (l_R \mid r_R) : (A_L, A_L') \otimes (A_R,
    A_R') \to (B_L, B_L') \otimes (B_R, B_R')
  \end{align*}
  given by
  \begin{center}
    \tikzfig{diagrams/tensor-left} \quad \tikzfig{diagrams/tensor-right}
  \end{center}
\end{proposition}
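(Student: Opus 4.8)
The plan is to follow the same strategy used for the monoidal structure on $\Optic_\C$, treating the two residual variables $P$ and $Q$ exactly as the single residual of an optic is treated, and exploiting the fact that the associators, unitors and braiding can all be realised as ``pure'' learners whose parameter objects are the monoidal unit $I$. The unit of the tensor is $(I, I)$.

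First I would check that the displayed action on morphisms is well-defined, i.e.\ that the tensor-left and tensor-right diagrams respect the coend relation in each of the four parameter variables $P_L, Q_L, P_R, Q_R$. The tensored learner has parameter objects $P_L \otimes P_R$ and $Q_L \otimes Q_R$ (up to reassociation and symmetry), and sliding a morphism $P_L \to P_L'$ across the coend on the left factor corresponds, after tensoring, to sliding the induced morphism on $P_L \otimes P_R$ across the coend of the composite object; hence the two representatives name the same learner. With well-definedness in hand, bifunctoriality splits into preservation of identities---immediate from the unit laws of $\C$, since the identity learner has $P = Q = I$---and the interchange law, which I would verify by the same string-diagram isotopy used above for associativity of composition, reading off that tensoring two composites and composing two tensors produce isotopic diagrams.

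Next I would supply the coherence isomorphisms. Because $(A, A') \otimes (B, B') = (A \otimes B,\, B' \otimes A')$ reverses the order of the backward components, the associator, unitors and braiding of $\Learn_\C$ are built from the symmetric monoidal structure of $\C$: its associator and unitors act directly on the forward components, while on the backward components the reversed order forces the structure maps of the opposite, reversed monoidal category $(\C^{\mathrm{op}}, \otimes^{\mathrm{rev}})$, which are themselves assembled from those of $\C$ via its symmetry. Each such map is realised as a learner with trivial parameters $P = Q = I$, and together they package into a strong symmetric monoidal functor $\C \times (\C^{\mathrm{op}}, \otimes^{\mathrm{rev}}) \to \Learn_\C$ that is the identity on objects. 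The pentagon, triangle and hexagon identities for $\Learn_\C$ then reduce to the corresponding identities in $\C$, since all participating maps are pure and compose exactly as their counterparts do there.

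The main obstacle is naturality of these structure maps against \emph{arbitrary} learners rather than only pure ones: the functor above yields only naturality squares whose other two sides are pure, whereas for, say, the braiding one must check $\sigma \circ (f_1 \otimes f_2) = (f_2 \otimes f_1) \circ \sigma$ for general learners $f_1, f_2$ with nontrivial parameters. I expect each such square to reduce to an isotopy of string diagrams---sliding the pure braiding or associator past the parameter wires of the $f_i$---combined with the coend relation to reconcile the two ways the symmetry of $\C$ permutes the parameter objects $P_L \otimes P_R$ and $Q_L \otimes Q_R$. The twist in the backward components is the fiddly part, since the braiding must cross the backward wires in the opposite order, but I anticipate no essential difficulty beyond careful bookkeeping.
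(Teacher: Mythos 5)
Your overall route matches the paper's: check that the action on morphisms respects the coend relations, check bifunctoriality by string-diagram manipulation, and obtain the associator, unitors and symmetry as pure learners, i.e.\ images of the structure maps of $\C \times \C^{\mathrm{op}}$ (with the reversed tensor on the second factor) under the inclusion, so that the coherence axioms are inherited.

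However, one step is mis-stated in a way that matters. You claim the interchange law holds because ``tensoring two composites and composing two tensors produce isotopic diagrams.'' They do not. Writing $(l_{Li} \mid r_{Li})$ and $(l_{Ri} \mid r_{Ri})$ for the factors, the left component of $((l_{L2} \mid r_{L2}) \circ (l_{L1} \mid r_{L1})) \otimes ((l_{R2} \mid r_{R2}) \circ (l_{R1} \mid r_{R1}))$ and the left component of $((l_{L2} \mid r_{L2}) \otimes (l_{R2} \mid r_{R2})) \circ ((l_{L1} \mid r_{L1}) \otimes (l_{R1} \mid r_{R1}))$ differ by genuine symmetry twists on the pairs $(P_{L1}, P_{R2})$ and $(Q_{R2}, Q_{L1})$: the two expressions interleave the four parameter objects differently, and no planar isotopy or naturality of the braiding within the left component alone removes the discrepancy. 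The paper's proof resolves this by observing that matching twists appear in the \emph{right} component, and the coend relation lets each twist be cancelled against its inverse across the gap --- an inter-component cancellation, not an isotopy. You do invoke exactly this mechanism elsewhere (for well-definedness, and for naturality of the braiding against general learners, where you correctly anticipate the twist bookkeeping), so the repair is within your toolkit; but as written, the functoriality step --- the one real subtlety in this proposition --- would not go through.
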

\begin{proof}
  Similar to \cite[Theorem 2.0.12]{riley:optics}, but using a version
  of the `switched' tensor from \cite[Definition 2.1.1]{riley:optics}.
  The key facts to check are that the action on morphisms does not
  depend on the choice of representative and that $\otimes$ is
  functorial. The first is clear from the above diagrams: for example,
  a morphism on the $Q_L$ string between $l_L$ and $r_L$ can be slid from
  $l_L$ to $r_L$ in the string diagram (using the coend relation to
  hop the gap), and similarly for the three strings corresponding to $Q_R$, $P_L$ and $P_R$.

  Functoriality of $\otimes$ follows from the equality of the
  following diagrams, depicting the left component of the morphisms
  \begin{align*}
    ((l_{L2} \mid r_{L2}) \circ (l_{L1} \mid r_{L1}))
    &\otimes ((l_{R2} \mid r_{R2}) \circ (l_{R1} \mid r_{R1})) \\
  \intertext{and}
    ((l_{L2} \mid r_{L2}) \otimes (l_{R2} \mid r_{R2}))
    &\circ ((l_{L1} \mid r_{L1}) \otimes (l_{R1} \mid r_{R1}))
  \end{align*}
  respectively. (Imagine dragging the $l_{L1}$ node below the $P_{R2}$
  string to the left, and the $l_{L2}$ node below the $Q_{R1}$ string
  to the right.)
  \begin{center}
    \tikzfig{diagrams/tensor-functorial1}
  \end{center}
  \vspace{10pt}
  \begin{center}
    \tikzfig{diagrams/tensor-functorial2}
  \end{center}
  The latter diagram is almost exactly the correct one, other than the
  outer twists on the pairs $(P_{L1}, P_{R2})$ and $(Q_{R2}, Q_{L1})$.
  These are cancelled with similar twists in the right component using
  the coend relations.

  As in the $\Optic$ setting, the structure morphisms (including the
  symmetry isomorphism) are constructed as the image of the structure
  morphisms of $\C \times \C^\op$, and functoriality of the inclusion
  guarantees that the equations of a symmetric monoidal category hold.
\end{proof}

\section{Duality}

For extensional learners, there is an obvious involution
$(-)^* : \Learn \to \Learn^{\op}$ that comes into view by virtue of
the new symmetrical formulation: just switch the two components!

\begin{proposition}
  There is a (strict) involutive symmetric monoidal functor
  $(-)^* : \Learn \to \Learn^{\op}$ given by $(A, A')^* := (A', A)$
  and $(L \mid R)^* := (R \mid L)$.
\end{proposition}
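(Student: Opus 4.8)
The plan is to read every requirement off the evident left--right symmetry of the defining coend. Writing $X = (A, A')$ and $Y = (B, B')$, I would first observe that the two hom-sets
\[
\Learn(X, Y) = \int^{P, Q : \C} \C(P \otimes A, Q \otimes B) \times \C(Q \otimes B', P \otimes A')
\]
and $\Learn(Y^*, X^*) = \Learn((B', B), (A', A))$ are literally the same coend once one swaps the two factors of the integrand and renames the bound variables $P \leftrightarrow Q$. This single observation does most of the work: the induced bijection is exactly $(L \mid R) \mapsto (R \mid L)$; it is well defined because relabelling a coend and permuting its factors preserves every dinaturality identity, so the choice of representative is immaterial; and since $\Learn^{\op}(X^*, Y^*) = \Learn(Y^*, X^*)$, the assignment has precisely the variance of a functor into the opposite category. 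Swapping back returns the original pair and $(A, A')^{**} = (A, A')$ holds on the nose, so $(-)^*$ is a genuine (strict) involution.

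Next I would check functoriality into $\Learn^{\op}$. The identity learner $(\id_{I \otimes A} \mid \id_{I \otimes A'})$ is sent to $(\id_{I \otimes A'} \mid \id_{I \otimes A})$, which is exactly the identity on $(A', A) = (A, A')^*$. For composites of $(l_1 \mid r_1) : X \hto Y$ and $(l_2 \mid r_2) : Y \hto Z$ one must verify the contravariance equation $((l_2 \mid r_2) \circ (l_1 \mid r_1))^* = (r_1 \mid l_1) \circ (r_2 \mid l_2)$ in $\Learn$, i.e.\ that switching the two components converts composition into composition in the reverse order. This is forced by the mirror symmetry of the composition string diagrams: the right component of a composite is assembled from $r_1$ and $r_2$ by exactly the diagram that builds the left component of the oppositely-ordered composite from the switched learners, so the two agree up to the isotopy and coend-hopping already invoked in the composition proposition.

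Finally, the symmetric monoidal structure. The unit is fixed, $(I, I)^* = (I, I)$. On a tensor, however, writing $X = (X_1, X_2)$ and $Y = (Y_1, Y_2)$, one has $(X \otimes Y)^* = (Y_2 \otimes X_2,\, X_1 \otimes Y_1)$ whereas $X^* \otimes Y^* = (X_2 \otimes Y_2,\, Y_1 \otimes X_1)$, so the two differ by a symmetry of $\C$ in each slot. These symmetries, included through $\iota$, serve as the monoidal comparison maps. Because the symmetric monoidal structure of $\Learn$ was obtained as the image of that of $\C \times \C^{\op}$, and $(-)^*$ is induced by the swap $(C, D) \mapsto (D, C)$, the coherence of these comparison maps and their compatibility with the symmetry both reduce to coherence in $\C$.

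I expect this last point to be the main obstacle, together with the precise reading of ``strict''. Since the tensor on objects is the switched one, $(-)^*$ is not strict monoidal but only strong, its comparison maps being the symmetries above; ``strict'' is best understood as asserting the strictness of the involution $(-)^{**} = \id$, while the monoidal part is strong with structure maps inherited from $\C$. The only genuinely diagrammatic step is the composition-reversal check, which becomes routine once the symmetry between a composite's two components is made explicit.
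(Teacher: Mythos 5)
Your first two paragraphs are correct and are essentially the paper's own (terser) argument: well-definedness of $(L \mid R) \mapsto (R \mid L)$ follows from the left--right symmetry of the defining coend, and preservation of identities and composition follows because the structure maps of $\Learn_\C$ are exactly symmetric between the two components. The genuine problem is your third and fourth paragraphs. You compare $(X \otimes Y)^*$ with $X^* \otimes Y^*$, observe (correctly) that these differ by symmetries, and then conclude that $(-)^*$ is only \emph{strong} monoidal, downgrading the word ``strict'' to a statement about the involution $(-)^{**} = \id$ alone. That is the wrong comparison for a duality functor. For a contravariant monoidal functor --- equivalently a monoidal functor whose codomain carries the reversed tensor, which is the standard convention for dualities and involutions, and visibly the one intended here --- the comparison is with the tensor in the \emph{opposite order}, and there the equality holds on the nose:
\begin{align*}
  \left( (X_1, X_2) \otimes (Y_1, Y_2) \right)^*
  = (X_1 \otimes Y_1,\, Y_2 \otimes X_2)^*
  = (Y_2 \otimes X_2,\, X_1 \otimes Y_1)
  = (Y_2, Y_1) \otimes (X_2, X_1)
  = Y^* \otimes X^*.
\end{align*}
Your own computations already contain both sides of this equality; you simply paired them up in the wrong order. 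So the strictness claimed in the proposition is genuinely about the monoidal structure, and it holds with no comparison symmetries whatsoever.

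Your closing inference also runs backwards: you write that \emph{because} the tensor on objects is the switched one, $(-)^*$ fails to be strict. The paper draws the opposite moral immediately after its proof: the switched tensor $(A, A') \otimes (B, B') := (A \otimes B, B' \otimes A')$ was chosen \emph{precisely so that} $(X \otimes Y)^* = Y^* \otimes X^*$ is an equality of objects rather than a mere isomorphism. Had the tensor been defined unswitched, as $(A \otimes B, A' \otimes B')$, one would indeed need the symmetries of $\C$ as structure maps, which is the situation your proposal describes; the switched definition is what eliminates them.
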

\begin{proof}
  On objects, we have
  \begin{align*}
    \left( (A, A') \otimes (B, B') \right)^*
    = (A \otimes B, B' \otimes A')^*
    = (B' \otimes A', A \otimes B)
    = (B', B) \otimes (A', A)
    = (B, B')^* \otimes (A, A')^*
  \end{align*}
  Preservation of identity, composition and tensor of morphisms is
  clear by observing that their definitions are exactly symmetric
  between each component.
\end{proof}
The resulting strictness of this functor is why it is convenient to
switch the objects in the right component of
$(A, A') \otimes (B, B') :\defeq (A \otimes B, B' \otimes A')$.

\begin{remark}
Let us calculate how this duality acts on intensional learners.
Stepping through the equivalence between intensional learners and
their coend formulation, the dual of a learner
$(P, I, U, r) : (A, A') \hto (B, B')$ is a learner
$(P^*, I^*, U^*, r^*) : (B', B) \hto (A', A)$ with parameter set
$P^* := P \times A$, and
\begin{alignat*}{3}
&I^* : (P \times A) \times B' &&\to A' \\
&I^*((p, p_a), b') &&:= r(p, p_a, b') \\
&U^* : (P \times A) \times B' \times A &&\to (P \times A) \\
&U^*((p, p_a), b', a) &&:= (U(p, p_a, b'), a) \\
&r^* : (P \times A) \times B' \times A &&\to B \\
&r^*((p, p_a), b', a) &&:= I(U(p, p_a, b'), a)
\end{alignat*}
If we do this twice, we return to a learner $(A, A') \hto (B, B')$
with parameter set $P^{**} := P \times A \times B'$, and
\begin{alignat*}{3}
&I^{**} : (P \times A \times B') \times A &&\to B \\
&I^{**}((p, p_a, p_{b'}), a) &&:= I(U(p, p_a, p_{b'}), a) \\
&U^{**} : (P \times A \times B') \times A \times B' &&\to (P \times A \times B') \\
&U^{**}((p, p_a, p_{b'}), a, b') &&:= (U(p, p_a, p_{b'}), a, b') \\
&r^{**} : (P \times A \times B') \times A \times B' &&\to A' \\
&r^{**}((p, p_a, p_{b'}), a, b') &&:= r(U(p, p_a, p_{b'}), a, b')
\end{alignat*}
Under intensional equivalence this is not equal to the original
learner, but under extensional equivalence it is: use
$U : P \times A \times B' \to P$ as the function $f$ between parameter
sets, and the identity
$\id : P \times A \times B' \to P \times A \times B'$ as the diagonal
filler $\hat{U}$. This double-dual learner is extensionally identical
to the original learner, but intensionally it is `running one training
datum behind': when fed a new element of $A \times B'$, it updates $P$
using the pair remembered in the parameter set and stores the provided
element of $A \times B'$ for next time.
\end{remark}

An object $(A, A')$ and its counterpart $(A', A)$ are related by cup
and cap morphisms:

\begin{definition}
  For any object $(A, A')$, define the cup
  \begin{align*}
    \eta_{(A, A')} : (I, I) \hto (A, A') \otimes (A, A')^* = (A \otimes A', A \otimes A')
  \end{align*}
  as the learner with $P := A \otimes A'$ and $Q := I$ and the obvious
  maps
  \begin{align*}
    (A \otimes A') \otimes I &\to I \otimes (A \otimes A') \\
    I \otimes (A \otimes A') &\to (A \otimes A') \otimes I
  \end{align*}
  and define the cap
  \begin{align*}
    \varepsilon_{(A, A')} : (A' \otimes A, A' \otimes A) = (A, A')^* \otimes (A, A') \hto (I, I)
  \end{align*}
  as the learner with $P := I$ and $Q := A' \otimes A$ and the obvious maps
  \begin{align*}
    I \otimes (A' \otimes A) &\to (A' \otimes A) \otimes I \\
    (A' \otimes A) \otimes I &\to I \otimes (A' \otimes A).
  \end{align*}
\end{definition}
These maps are dual, in that:
\begin{align*}
  (\eta_{(A, A')})^* = \varepsilon_{(A, A')^*} \qquad (\varepsilon_{(A, A')})^* = \eta_{(A, A')^*}
\end{align*}

All signs are pointing to $(A, A')^*$ being the actual monoidal dual
of $(A, A')$. Tragically,
\begin{proposition}
  $\Learn_\C$ is typically not compact closed.
\end{proposition}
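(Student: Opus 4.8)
The plan is to show that the candidate duality data $\eta, \varepsilon$ fails a snake (triangle) equation, since this is a necessary condition for compact closure. It suffices to examine a single zig-zag, say
$s := (\id_{(A,A')} \otimes \varepsilon_{(A,A')}) \circ (\eta_{(A,A')} \otimes \id_{(A,A')}) : (A, A') \hto (A, A')$
(precomposed and postcomposed with the evident unitors and associator), and to ask whether $s$ equals the identity learner on $(A, A')$. Because the proposition only claims failure \emph{typically}, exhibiting one object in one symmetric monoidal category where this equation fails is enough, so I am free to specialise to a convenient $\C$.

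First I would compute $s$ explicitly, most transparently by tracing the two representative string diagrams through the composition and tensor formulas, or concretely in $\C = \Set$ with $\otimes = \times$. Unwinding the definitions, the forward leg of $\eta \otimes \id$ reads a fresh pair out of the cup's parameter $P = A \otimes A'$ and passes the genuine input along untouched, while the forward leg of $\id \otimes \varepsilon$ stores its paired input into the cap's residual $Q = A' \otimes A$ and emits only the component coming from $\id$. Composing, $s$ is represented by a learner with parameter $P \cong A \otimes A'$ and residual $Q \cong A' \otimes A$ whose forward component returns the value read from the parameter, discarding the current input, and shunts that input into the backward/update channel instead. The intuition is that in a genuine compact closed category the cup and cap teleport the incoming wire across the snake, whereas in $\Learn_\C$ this teleportation cannot happen in a single forward pass: the information that ought to cross the cap is exactly what the cap routes backwards. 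The upshot is a \emph{delay}, a learner that behaves like an identity perpetually reporting its stored parameter one datum behind.

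It remains --- and this is the crux --- to prove that $s$ is not extensionally equivalent to $\id_{(A,A')}$, whose forward component returns its input verbatim. This is genuinely the hard step, because extensional equivalence is a coend relation that is free to replace the parameter and residual objects, so one cannot merely observe that $s$ carries the nontrivial parameter $A \otimes A'$ while the identity carries $I$. Instead I would exhibit an invariant of the coend separating the two. Specialising to $\C = \Set$ with $A$ a two-element set and $A' = I$ reduces $\Learn_\C((A,I),(A,I))$, via the co-Yoneda lemma in the residual variable, to the coend $\int^{P} \Set(P \times A, A) \times \Set(P \times A, P)$ of ``Mealy machines'' modulo the generated dinaturality relation; there the identity is the one-state machine returning its input, while $s$ is a machine whose output ignores its input. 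The obstacle is that the obvious candidate invariants (such as the set of realised input-to-output functions) are only monotone along the generating relations rather than preserved, so some care is needed to extract a genuinely invariant quantity that differs between the delay machine and the identity machine. Finally I would reconcile this with the earlier observation that the double dual is extensionally \emph{equal} to the original: there the delayed learner still consults the current input when it eventually computes its output, so the coend relation can absorb the delay, whereas the snake discards the current input outright and no relation can recover it.
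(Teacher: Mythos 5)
Your first half matches the paper exactly: the snake composite on $(A,A')$ is the ``delay'' learner with $P \cong A \otimes A'$ and $Q \cong A' \otimes A$, forward component a symmetry (emit the stored parameter, store the current input) and backward component an identity. The genuine gap is at the step you yourself flag as the crux: you never prove that this delay learner is extensionally distinct from the identity. Choosing $\C = \Set$ makes that step hard, not merely fiddly --- the coend over all parameter and residual objects is exactly the kind of object the paper warns about (it remarks that the learners $(I,I) \hto (I,I)$ compute the \emph{trace} of $\C$, ``difficult to calculate even in simple cases''), and, as you correctly observe, the naive invariant (the set of realised input-output behaviours of the Mealy machine) is only monotone along the generating dinaturality relations, since states outside the image of the mediating map have behaviours that are merely one-step derivatives of surviving ones. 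A proof that ends with ``some care is needed to extract a genuinely invariant quantity'' has not produced one; the proposal is incomplete precisely where all the difficulty is concentrated.

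The paper avoids this difficulty entirely by choosing $\C$ so that the coend relation is trivial: a commutative monoid regarded as a discrete symmetric monoidal category. There the only morphisms are identities, so the coend collapses to literal equality of representatives, and the snake learner (parameter and residual $A$) cannot equal the identity learner (parameter and residual $I$) whenever $A \neq I$; no invariant is needed because there are no nontrivial maps available at all. For what it is worth, your $\Set$ route can be completed: writing $B(M)$ for the set of behaviours realised by states of a machine $M$ and $S_n(M)$ for the set of $n$-fold input-derivatives of elements of $B(M)$, one checks that $S_{n+1}(M) \subseteq S_n(M)$ and that an elementary dinaturality relation between $M_1$ and $M_2$ forces $S_n(M_2) \subseteq S_n(M_1) \subseteq S_{n-1}(M_2)$, so $\bigcap_n S_n(-)$ \emph{is} a genuine invariant of the coend; it is the singleton last-letter behaviour for the identity machine but consists of the ``second-to-last letter seeded by $p$'' behaviours for the delay machine, and these differ once $A$ has two elements. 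Supplying that argument would rescue your proof, but it is substantially more work than the paper's one-line discrete counterexample.
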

\begin{proof}
  Consider an object of the form $(A, I)$. After cancelling some
  monoidal units, the composite learner
  \begin{align*}
    (A, I) \hto (A, I) \otimes ((A, I)^* \otimes (A, I)) \hto ((A, I)
    \otimes (A, I)^*) \otimes (A, I) \hto (A, I)
  \end{align*}
  corresponding to the snake equation for $(A, I)$ has $P := A$ and
  $Q := A$, and components
  \begin{align*}
    A \otimes A &\xrightarrow{s_{A, A}} A \otimes A \\
    A \otimes I &\xrightarrow{\id} A \otimes I.
  \end{align*}
  This is typically not equal to the identity learner; a priori there
  are no non-trivial maps involving $A$ whatsoever, as can be seen by
  setting $\C$ to be a commutative monoid considered as a discrete
  category.

  As an intensional learner, the above learner is similar to the
  identity map but with its output ``delayed one step'', and has
  $P := A$ with maps $I(p, a) := p$ and  $U(p, b, a) := a$.
\end{proof}

Learners do have the following weaker structure, much like the
teleological categories discussed in \cite[Section
5]{CoherenceForLenses} and \cite[Section 2.1]{riley:optics} but with
cups as well as caps.

\begin{proposition}
  The families of morphisms
  \begin{align*}
    \eta_{(A, A')} &: (I, I) \hto (A, A') \otimes (A, A')^* \\
    \varepsilon_{(A, A')} &: (A, A')^* \otimes (A, A') \hto (I, I)
  \end{align*}
  are monoidal, and are extranatural with respect to morphisms in the
  image of $\iota : \C \times \C^\op \to \Learn_\C$. Furthermore,
  $s_{(A, A'),(A, A')^*}\eta_{(A, A')} = \eta_{(A, A')^*}$ and
  $\varepsilon_{(A, A')} s_{(A, A'),(A, A')^*} = \varepsilon_{(A,
    A')^*}$, where $s_{(A, A'),(A, A')^*}$ denotes the symmetry
  morphism.
\end{proposition}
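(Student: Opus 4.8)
The plan is to verify each of the three asserted properties of the cup and cap families by unwinding them to their explicit coend representatives and reducing to string-diagrammatic identities in $\C$. Since $\eta_{(A,A')}$ and $\varepsilon_{(A,A')}$ are defined by ``obvious'' structure maps with $P$ and $Q$ chosen to be either $I$ or $A \otimes A'$, each verification amounts to checking that two learners, when written as pairs of morphisms in $\C$, agree after using the coend relation to slide any intervening morphism across the gap between the two components.

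First I would handle the symmetry relations $s_{(A,A'),(A,A')^*}\,\eta_{(A,A')} = \eta_{(A,A')^*}$ and $\varepsilon_{(A,A')}\,s_{(A,A'),(A,A')^*} = \varepsilon_{(A,A')^*}$, since these are the most concrete. Composing $\eta_{(A,A')}$ with the symmetry permutes the two tensor factors of the target; unwinding the definitions, both sides present a learner $(I,I) \hto (A' \otimes A, A' \otimes A)$ whose left and right components are built from the appropriate unitors and the symmetry $s_{A,A'}$ of $\C$. I expect these to coincide on the nose once the switched tensor convention for $\otimes$ on learners is spelled out, because the symmetry morphism $s_{(A,A'),(A,A')^*}$ is by construction the image under $\iota$ of the symmetry in $\C \times \C^{\op}$, and the defining maps of $\eta$ are themselves just structural isomorphisms.

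Next I would treat monoidality, i.e.\ that $\eta_{(A,A') \otimes (B,B')}$ agrees with the composite of $\eta_{(A,A')}$ and $\eta_{(B,B')}$ (mediated by associators and symmetries), and dually for $\varepsilon$. Here the parameter object on the left is $(A \otimes B) \otimes (B' \otimes A')$ while the separate cups contribute $(A \otimes A') \otimes (B \otimes B')$; the two are related by a symmetry that reorders the middle factors, and I would check this symmetry is exactly the coherence isomorphism forced by the switched-tensor definition of $\otimes$ on objects. The argument should again reduce to coherence in $\C$ together with one application of the coend relation to absorb the reordering isomorphism (pairing it with its inverse across the two components).

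Finally, for extranaturality with respect to morphisms $f : A \to B$ in the image of $\iota$, I would write out the two composites that extranaturality demands equal and display them as string diagrams, then slide the node labelled $f$ along the relevant string across the gap using the coend relation, exactly as in the functoriality arguments of the earlier propositions. The main obstacle I anticipate is bookkeeping: keeping the switched-tensor convention, the placement of units $I$, and the direction of each structural isomorphism consistent across all three checks, so that every reordering isomorphism introduced on the left component is visibly cancelled by its inverse on the right component via the coend. None of the steps should require genuinely new ideas beyond the coend-sliding technique already established; the work is entirely in confirming that the structure maps line up.
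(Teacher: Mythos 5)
Your proposal is correct and takes essentially the same approach as the paper: the paper's entire proof is the one-line observation that the definitions of $\eta$ and $\varepsilon$ are so simple that all three properties are easily checked by direct calculation, which is precisely the unwinding-to-representatives and coend-sliding verification you outline. One small caution: the symmetry relations do not hold ``on the nose'' as pairs of maps in $\C$, since the parameter objects ($A \otimes A'$ versus $A' \otimes A$) differ, but the coend-relation step you already describe---pairing the reordering isomorphism with its inverse across the gap---is exactly what closes that difference.
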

\begin{proof}
  The definitions of $\eta$ and $\epsilon$ are so simple that these
  are all easily checked by direct calculation.
\end{proof}

We have a canonical decomposition of any learner using these
morphisms, in the same style as \cite[Proposition
2.1.10]{riley:optics}.
\begin{lemma}\label{lem:learner-decomp}
  Every learner $(l \mid r) : (S, S') \hto (A, A')$ is equal to the
  learner described by the diagram
  \begin{center}
    \tikzfig{diagrams/decomp}
  \end{center}
  recalling that we silently include objects and morphisms of $\C$
  in $\Learn_\C$ using $\iota$.
\end{lemma}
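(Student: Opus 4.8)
The plan is to compute the learner described by the decomposition diagram explicitly, by expanding each of its constituent morphisms into its defining representative and then composing them with the composition rule established above. The diagram is assembled from three kinds of pieces: the $\iota$-images of the two maps $l : P \otimes S \to Q \otimes A$ and $r : Q \otimes A' \to P \otimes S'$, now regarded merely as morphisms of $\C$; a cup $\eta$ and a cap $\varepsilon$, which introduce and contract the parameter wires; and the structural unitors and symmetries of $\C$. Since each of these has an explicit, near-trivial representative, the composite learner can be read off mechanically.

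First I would expand the cups and caps, noting the asymmetry that makes the decomposition work: a cup $\eta$ carries a nontrivial $P$-residual and trivial $Q$-residual, while a cap $\varepsilon$ carries a trivial $P$-residual and a nontrivial $Q$-residual, with all four of their components being unitors and symmetries. Thus the cup bends the forward parameter wire $P$ into the domain of $l$ and back out of $r$, and the cap contracts the residual wire $Q$ produced by $l$ against the one consumed by $r$. Composing through, the left component of the decomposition learner becomes $l$ pre- and post-composed with structural isomorphisms, and symmetrically the right component becomes $r$; the overall residuals are $P$ and $Q$ up to reassociation.

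The key step is then to discharge these structural isomorphisms using the coend relation. Each unitor or symmetry forced onto a $P$- or $Q$-wire by a cup is matched by its inverse on the same wire across the gap, exactly as in the associativity argument for composition, so the coend relation lets me slide them off and identify the composite representative $(l' \mid r')$ with $(l \mid r)$ on the nose. This is precisely the mechanism of \cite[Proposition 2.1.10]{riley:optics}, and once both components are drawn out the same string-diagram isotopy applies.

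The hard part will be the bookkeeping: tracking which wire is $P$, which is $Q$, and which are the visible objects $S, S', A, A'$ as they thread through the cup and cap, and verifying that every twist and unitor accrued from bending the parameter wires is genuinely paired with its inverse across the gap, so that the coend relation is applicable rather than merely plausible. With that diagrammatic accounting done, the equality with $(l \mid r)$ should be manifest by isotopy together with the coend relation, and I expect no essential difficulty beyond this verification.
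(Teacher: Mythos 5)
Your proposal is correct and matches the paper's own (much terser) proof, which simply says the lemma "can be checked by direct calculation, cancelling all the occurrences of the monoidal unit created by the use of $\iota$" --- your expansion of the cup/cap representatives, composition, and cancellation of structural isomorphisms via the coend relation is exactly that calculation spelled out. In particular your reading of the cup and cap data (nontrivial $P$ with trivial $Q$ for $\eta$, and vice versa for $\varepsilon$) agrees with the paper's definitions, so no gap remains.
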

\begin{proof}
  This can be checked by direct calculation, cancelling all the
  occurrences of the monoidal unit created by the use of $\iota$.
\end{proof}

The snake equations are precisely what is missing for $\Learn_\C$ to
be compact closed.
\begin{definition}
  The category $\Atemp_\C$ of \emph{atemporal learners} is the
  quotient of $\Learn_\C$ obtained by identifying the snake diagram
  \begin{align*}
    (A, A') \hto ((A, A') \otimes (A, A')^*) \otimes (A, A') \hto (A,
    A') \otimes ((A, A')^* \otimes (A, A')) \hto (A, A')
  \end{align*}
  with the identity morphism for every object $(A, A')$.
\end{definition}

Trivialising just one of the snake diagrams also trivialises the
other: the dual snake diagram is the image of this one under the
strictly monoidal functor $-^*$.

\begin{proposition}\label{prop:atemp-closed}
  $\Atemp_\C$ is compact closed.
\end{proposition}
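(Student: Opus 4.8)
The plan is to show that $\Atemp_\C$ is symmetric monoidal (inherited from the quotient) and that the data $((A,A')^*, \eta_{(A,A')}, \varepsilon_{(A,A')})$ already available in $\Learn_\C$ descends to genuine duality data satisfying both snake equations. First I would verify that the quotient is well-behaved: the relation identifying each snake composite with the identity must be a monoidal congruence, so that $\Atemp_\C$ is a symmetric monoidal category and the quotient functor $\Learn_\C \to \Atemp_\C$ is strict symmetric monoidal. Since $\eta$ and $\varepsilon$ are already monoidal and satisfy the symmetry-compatibility recorded in the earlier proposition, their images in $\Atemp_\C$ inherit these properties automatically.

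The core of the argument is the two snake (triangle) identities. By construction, the defining relation of $\Atemp_\C$ forces the snake composite
\begin{align*}
  (A, A') \hto ((A, A') \otimes (A, A')^*) \otimes (A, A') \hto (A, A') \otimes ((A, A')^* \otimes (A, A')) \hto (A, A')
\end{align*}
to equal $\id_{(A,A')}$, which is exactly one of the two zig-zag equations. For the other, I would invoke the remark immediately preceding the proposition: applying the strict monoidal involution $(-)^*$ to the trivialised snake yields the dual snake, and since $(-)^*\colon \Atemp_\C \to \Atemp_\C^{\op}$ sends identities to identities and $\eta_{(A,A')}$, $\varepsilon_{(A,A')}$ to $\varepsilon_{(A,A')^*}$, $\eta_{(A,A')^*}$ respectively, the second zig-zag for $(A,A')^*$ holds as well. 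As $(A,A')$ ranges over all objects so does $(A,A')^*$, so both triangle identities hold for every object, establishing that $(A,A')^*$ is a genuine monoidal dual.

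Finally, to conclude compact closure I would note that a symmetric monoidal category in which every object has a dual is compact closed, so it remains only to confirm that the chosen duality is coherent with the symmetry. This is guaranteed by the compatibility $s_{(A,A'),(A,A')^*}\,\eta_{(A,A')} = \eta_{(A,A')^*}$ and $\varepsilon_{(A,A')}\,s_{(A,A'),(A,A')^*} = \varepsilon_{(A,A')^*}$, which descends to $\Atemp_\C$ and makes the duality self-dual in the appropriate sense. The main obstacle I anticipate is the first step: checking that the single imposed relation generates a genuine monoidal congruence, i.e.\ that identifying the snakes with identities is stable under tensoring and composition so that the quotient actually forms a symmetric monoidal category rather than merely a quotient of hom-sets. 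Once the congruence is in place, the involution argument makes the remaining zig-zag identity essentially free, which is the elegant payoff of the symmetric formulation.
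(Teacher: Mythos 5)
Your proposal is correct, but it closes the argument by a genuinely different route than the paper. The shared part: the quotient is taken by the generated monoidal congruence (your anticipated ``main obstacle'' is not really one --- one simply defines the congruence to be the monoidal congruence generated by the snake relations, so the quotient is symmetric monoidal and the projection strict by construction), one zig-zag identity holds by fiat, and the other follows from the involution together with the compatibilities $s_{(A,A'),(A,A')^*}\eta_{(A,A')} = \eta_{(A,A')^*}$ and $\varepsilon_{(A,A')}s_{(A,A'),(A,A')^*} = \varepsilon_{(A,A')^*}$, exactly as in the remark preceding the proposition (strictly speaking, the $*$-image of the snake for $(A,A')$ is the snake for $(A,A')^*$ built from $\eta_{(A,A')^*}$ and $\varepsilon_{(A,A')^*}$; the symmetry compatibilities plus coherence convert this into the dual zig-zag, and your observation that every object is of the form $(A,A')^*$ finishes it). The divergence is in what remains to be proved: you stop at the triangle identities and invoke the Kelly--Laplaza definition of compact closure as ``symmetric monoidal with every object dualizable'', under which nothing more is needed, since functoriality of duals and extranaturality of $\eta$ and $\varepsilon$ are automatic consequences of the triangle identities. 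The paper instead treats extranaturality of the cup and cap against \emph{all} morphisms of $\Atemp_\C$ as the remaining obligation (before the quotient they are extranatural only against morphisms in the image of $\iota$), and establishes it concretely: the snake equations make $\eta$ and $\varepsilon$ extranatural with respect to each other, and then Lemma~\ref{lem:learner-decomp}, which writes an arbitrary learner as $\iota$-morphisms composed with one cup and one cap, propagates extranaturality to every morphism. Your route is shorter and leans on standard general theory; the paper's is self-contained in the learner calculus, shows what the decomposition lemma is for, and is what you would need if your definition of compact closed packages in the natural duality data. Both are valid proofs.
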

\begin{proof}
  The only missing property is extranaturality of the cup and cap with
  respect to \emph{all} morphisms in $\Atemp_\C$. The snake equations
  holding implies that $\eta$ and $\varepsilon$ are extranatural with
  respect to each other:
  \begin{center}
    \tikzfig{diagrams/counit-vs-unit1}
    \quad $=$ \quad
    \tikzfig{diagrams/counit-vs-unit2}
  \end{center}
  and similarly with the roles reversed.

  From Lemma~\ref{lem:learner-decomp}, it follows that $\eta$ and
  $\varepsilon$ are natural with respect to \emph{every} map in
  $\Atemp_\C$. Decomposing a map as via the Lemma, we can then see:
  \begin{center}
    \tikzfig{diagrams/counit-natural1}
    \quad $=$ \quad
    \tikzfig{diagrams/counit-natural2} $=$
  \end{center}
  \begin{center}
    \tikzfig{diagrams/counit-natural3}
    \quad $=$ \quad
    \tikzfig{diagrams/counit-natural4} $=$
  \end{center}
  \begin{center}
    \tikzfig{diagrams/counit-natural5}
    \quad $=$ \quad
    \tikzfig{diagrams/counit-natural6}
  \end{center}
  The unit is similar.
\end{proof}

\section{Free Compact Closure}

We conclude by showing that this $\Atemp_\C$ is the free compact
closed category on a symmetric monoidal category $\C$.

There is an existing construction of the free compact closed category
given by composing three simpler constructions: \cite{ksw:feedback}
constructs the free \emph{feedback} monoidal category on a monoidal
category before quotienting it to form the free \emph{traced} monoidal
category. Then \cite{jsv:traced} constructs the free compact closed
category on a traced monoidal category. In the literature one also
finds the free compact closed category on a bare
category~\cite{kl:coherence-compact-closed}, and the free compact
closed category on a \emph{closed} monoidal category
\cite{day:note-compact-closed}, but these are less comparable to the
construction given here.


As a first step, any functor from $\C$ into a compact closed category
factors through $\Atemp_\C$.

\begin{theorem}
  Suppose $\C$ is a symmetric monoidal category and $\D$ is a compact
  closed category. For any symmetric monoidal functor $F : \C \to \D$,
  there is a factorisation
  \begin{center}
    \begin{tikzcd}
      \C \ar[dr, "\iota" swap] \ar[rr, "F"] & & \D \\
      & \Atemp_\C \ar[ur, "{\hat{F}}" swap]
    \end{tikzcd}
  \end{center}
\end{theorem}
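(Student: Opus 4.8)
The plan is to build the comparison functor $\hat{F}$ by using the compact closed structure of $\D$ to contract away the coend variables $P$ and $Q$. Writing $X^{\vee}$ for the dual of an object $X$ in $\D$, I would set $\hat{F}(A, A') := F(A) \otimes F(A')^{\vee}$ on objects. This choice is essentially forced: the involution $(A, A')^{*} = (A', A)$ of $\Learn_\C$ must be sent to dualisation in $\D$, and indeed $\hat{F}((A, A')^{*}) = F(A') \otimes F(A)^{\vee}$ is canonically isomorphic to $(F(A) \otimes F(A')^{\vee})^{\vee}$.

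On a morphism $(l \mid r) : (A, A') \hto (B, B')$ with representatives $l : P \otimes A \to Q \otimes B$ and $r : Q \otimes B' \to P \otimes A'$, I would first apply $F$ together with its monoidal comparison to obtain maps $F(l) : F(P) \otimes F(A) \to F(Q) \otimes F(B)$ and $F(r) : F(Q) \otimes F(B') \to F(P) \otimes F(A')$ in $\D$. Using the cups and caps of $\D$, transpose $F(r)$ to a map $\bar{r} : F(Q) \otimes F(A')^{\vee} \to F(P) \otimes F(B')^{\vee}$, compose it with $F(l)$ along the shared wire $F(Q)$, and take the partial trace over the loop $F(P)$. The resulting morphism $F(A) \otimes F(A')^{\vee} \to F(B) \otimes F(B')^{\vee}$ is the candidate value $\hat{F}(l \mid r)$; diagrammatically this is exactly the picture of Lemma~\ref{lem:learner-decomp} with $\iota$-images reinterpreted through $F$ and $\eta, \varepsilon$ sent to the cup and cap of $\D$.

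The technical heart of the argument is checking that this assignment is well defined on the coend, and I expect it to be the main obstacle. A change of representative by $f : P \to P'$ inserts $F(f)$ on the traced $F(P)$ loop, while a change by $g : Q \to Q'$ inserts $F(g)$ on the straight $F(Q)$ wire; in both cases the two resulting morphisms of $\D$ must agree, for the $Q$-wire by ordinary functoriality of composition and for the $P$-loop by cyclicity of the trace (yanking), which is precisely where compact closedness of $\D$ is used. Granting well-definedness, preservation of identities is immediate, since the identity learner has $P = Q = I$ and identity components; preservation of composition follows because the composite learner's diagram is the join of the two contracted diagrams and partial traces compose; and the symmetric monoidal structure is preserved using that $F$ is monoidal, the symmetry of $\D$, and the canonical isomorphism $(X \otimes Y)^{\vee} \cong Y^{\vee} \otimes X^{\vee}$ --- the ``switched'' tensor on $\Learn_\C$ is designed so that these match up.

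Finally I would dispatch the two remaining points. Because $\D$ is compact closed, the triangle identities force $\hat{F}$ to send the snake learner of each $(A, A')$ to the identity, so $\hat{F}$ respects the defining relation of $\Atemp_\C$ and descends to a functor on the quotient. And the factorisation $\hat{F} \circ \iota = F$ holds up to the monoidal comparison of $F$: on objects $\hat{F}(\iota A) = F(A) \otimes F(I)^{\vee} \cong F(A)$, and on a morphism $\iota(h) = (I \otimes h \mid \id)$ the contraction is trivial (there is nothing to trace) and returns $F(h)$.
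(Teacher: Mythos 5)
Your proposal is correct and takes essentially the same route as the paper: define $\hat{F}(A,A')$ as $FA$ tensored with the dual of $FA'$, send a learner $(l \mid r)$ to the morphism of $\D$ obtained by interpreting the decomposition of Lemma~\ref{lem:learner-decomp} with $\eta, \varepsilon$ replaced by the cup and cap of $\D$ (your transpose-and-partial-trace description is exactly the paper's string diagram), and then check well-definedness, functoriality, and symmetric monoidality just as you outline. Your explicit verification of invariance under the coend relation (associativity on the $Q$-wire, sliding around the traced $F(P)$-loop) is handled only implicitly in the paper, but it is the same argument, as is your observation that the factorisation $\hat{F} \circ \iota \cong F$ holds up to the monoidal comparison rather than strictly.
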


Here we follow the shape of the argument in \cite[Section
5]{jsv:traced}.

\begin{proof}
  Any object $(A, A')$ of $\Atemp_\C$ is isomorphic to
  \begin{align*}
    (A, A') \cong (A', I)^* \otimes (A, I) = (\iota A')^* \otimes \iota A
  \end{align*}
  which forces us to define $\hat{F}(A, A') := (FA')^* \otimes FA$. On
  a morphism $(l \mid r)$, define $\hat{F}(l \mid r)$ as indicated in
  the diagram
  \begin{center}
    \tikzfig{diagrams/fhat-morphism}
  \end{center}
  This is well defined with respect to the quotient on $\Learn_\C$,
  because the snake equations already hold in $\D$. The functor
  $\hat{F}$ behaves nicely with the inclusion
  $\iota : \C \times \C^\op \to \Atemp_\C$, because
  $\hat{F}(\iota(f, g)) = g^* \otimes f$ for any two morphisms of
  $\C$.

  There are now several things to check. First, that $\hat{F}$ is a
  functor. Preservation of identity follows because the cup and cap
  for the unit object in $\D$ are the unitors.
  Preservation of composition is the equivalence of the following
  diagrams:
  \begin{center}
    \tikzfig{diagrams/fhat-comp1}
    \quad $=$ \quad
    \tikzfig{diagrams/fhat-comp2}
  \end{center}

  We must equip $\hat{F}$ with the structure of a monoidal functor.
  For the structure isomorphism
  $\phi : \hat{F}(A_L, A'_L) \otimes \hat{F}(A_R, A'_R) \to
  \hat{F}(A_L \otimes A_R, A'_R \otimes A'_L)$, we take the composite
  \begin{align*}
    &\hat{F}(A_L, A'_L) \otimes \hat{F}(A_R, A'_R) \\
    &= (FA_L \otimes FA'_L{}^*) \otimes (FA_R \otimes FA'_R{}^*) \\
    &\to (FA_L \otimes FA_R) \otimes (FA'_L{}^*  \otimes FA'_R{}^*) \\
    &\to (FA_L \otimes FA_R) \otimes (FA'_R  \otimes FA'_L)^* \\
    &\to F(A_L \otimes FA_R) \otimes (F(A'_R  \otimes A'_L))^* \\
    &= \hat{F}(A_L \otimes A_R, A'_R \otimes A'_L) \\
    &= \hat{F}((A_L, A'_L) \otimes (A_R, A'_R))
  \end{align*}
  That this gives a monoidal functor follows from the equivalence of
  the diagrams
  \begin{center}
    \tikzfig{diagrams/fhat-tensor1}
    \quad $=$ \quad
    \tikzfig{diagrams/fhat-tensor2}
  \end{center}
  The functor is also symmetric monoidal, as the symmetry morphisms of
  $\Atemp_\C$ are inherited from the symmetry morphisms of $\C \times
  \C^\op$ via $\iota$.
\end{proof}

Let $\SymMon_g$ denote the 2-category of symmetric monoidal
categories, monoidal functors and monoidal natural
\emph{isomorphisms}, and let $\Comp$ denote the 2-category of compact
closed categories, monoidal functors and monoidal natural
transformations. Recall that no condition on monoidal functors between
compact closed categories is necessary: duals are always preserved up
to canonical isomorphism. We also do not need to explicitly restrict
the 2-cells in $\Comp$ to be invertible, because \emph{any} such
monoidal natural transformation is invertible:

\begin{proposition}{\cite[Proposition 7.1]{js:braided}}\label{prop:nat-inverse}
  Any monoidal natural transformation $\alpha : F \to G$ in $\Comp$
  has an inverse given by
  \begin{align*}
    G A \to (G (A^*))^* \xrightarrow{(\alpha_{A^*})^*} (F (A^*))^* \to F A
  \end{align*}
  at each object $A$. \qed
\end{proposition}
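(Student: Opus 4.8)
The plan is to exhibit the displayed composite as a genuine two-sided inverse of $\alpha_A$, with the whole argument resting on a single compatibility lemma. First I would fix notation for the two standard structures that make the statement typecheck: the \emph{duality comparison} isomorphisms $d^F_A : F(A^*) \xrightarrow{\sim} (FA)^*$ and $d^G_A : G(A^*) \xrightarrow{\sim} (GA)^*$, built from the monoidal structure maps of $F$ and $G$ together with the cups and caps, and the double-dual isomorphism $\zeta_X : X \xrightarrow{\sim} X^{**}$ of the compact closed category $\D$. With these in hand, the two unnamed maps in the statement are the canonical isomorphisms $GA \cong (G(A^*))^*$ and $(F(A^*))^* \cong FA$, namely $\kappa^G_A := (d^G_A)^* \circ \zeta_{GA}$ and $\kappa^F_A := \zeta_{FA}^{-1}\circ((d^F_A)^*)^{-1}$, so that the candidate inverse is exactly $\beta_A := \kappa^F_A \circ (\alpha_{A^*})^* \circ \kappa^G_A$.

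The crux is the compatibility lemma
\begin{equation*}
  (\alpha_A)^* \circ d^G_A \circ \alpha_{A^*} = d^F_A \colon F(A^*) \to (FA)^*,
  \tag{$\star$}
\end{equation*}
where $(\alpha_A)^*$ is the dual (mate) of $\alpha_A$ in $\D$. Equation $(\star)$ says precisely that $\alpha$ is compatible with passage to duals. I would prove it by unfolding the construction of $d^F$ and $d^G$ out of the structure maps $\phi$, $\phi_0$ of the two functors and the unit and counit of the duality, and then invoking the two monoidal-naturality axioms of $\alpha$ (its compatibility with $\phi$ and with $\phi_0$) together with ordinary naturality of $\alpha$ at the morphisms $\eta_A$ and $\varepsilon_A$ of $\C$. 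Since every building block of $d^F$ and $d^G$ is one of these, the comparison ``slides past'' $\alpha$ and the equation falls out; this is cleanest drawn as a string-diagram calculation in $\D$.

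Given $(\star)$, the inverse property is a short formal computation using nothing but contravariant functoriality of $(-)^*$, the relation $f^{**} = \zeta\, f\, \zeta^{-1}$, and naturality of $\zeta$ and of $\alpha$. Expanding $\beta_A \circ \alpha_A$ and pushing $\zeta_{GA}$ past $\alpha_A$ via naturality turns the inner part into $\bigl((\alpha_A)^* \circ d^G_A \circ \alpha_{A^*}\bigr)^*$ by functoriality of $(-)^*$; this is $(d^F_A)^*$ by $(\star)$, which cancels the factor $((d^F_A)^*)^{-1}$ in $\kappa^F_A$ and leaves $\zeta_{FA}^{-1}\circ \zeta_{FA} = \id_{FA}$. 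The identity $\alpha_A \circ \beta_A = \id_{GA}$ follows by the symmetric computation, or by applying $(\star)$ at $A^*$ and using $\zeta$. This identifies $\beta_A$ as the stated composite and shows it is a two-sided inverse at every object, so $\alpha$ is invertible.

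I expect the main obstacle to be $(\star)$: all the real work, and the only use of monoidality, is concentrated there, and the delicate point is tracking the comparison isomorphisms $d^F$, $d^G$ (and the coherence isomorphisms hidden in the phrase ``$F$ preserves duals up to canonical isomorphism'') carefully enough that the monoidal-naturality axioms apply on the nose. Once $(\star)$ is in place the remainder is routine bookkeeping with the functor $(-)^*$ and the double-dual isomorphism.
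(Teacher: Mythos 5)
Your plan has a genuine gap, and it sits exactly where you locate all the work: the claim that the single lemma $(\star)$ suffices. Note first that the paper gives no proof of this proposition at all (it is quoted from Joyal--Street, whose argument uses compatibility of $\alpha$ with \emph{both} the units and the counits of the induced dualities), so the comparison here is with the standard argument. Rewriting $(\star)$ as $(d^F_A)^{-1} \circ (\alpha_A)^* \circ d^G_A \circ \alpha_{A^*} = \id$, it has the form $x \circ y = \id$ with $y := \alpha_{A^*}$ and $x := (d^F_A)^{-1} \circ (\alpha_A)^* \circ d^G_A$; unwinding $d^F, d^G$ and yanking, $(\star)$ is precisely the statement that $\alpha$ is compatible with the \emph{counits}, $\varepsilon^G \circ (\alpha_{A^*} \otimes \alpha_A) = \varepsilon^F$, where $\varepsilon^F : F(A^*) \otimes FA \to I$ exhibits $F(A^*)$ as a dual of $FA$. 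Your computation of $\beta_A \circ \alpha_A = \id_{FA}$ from $(\star)$ is correct, but it is exactly the formal content of $x y = \id$: it makes every component of $\alpha$ a split monomorphism and nothing more. The other law $\alpha_A \circ \beta_A = \id_{GA}$ is equivalent to $y \circ x = \id$, i.e.\ to the mirror identity $\alpha_{A^*} \circ (d^F_A)^{-1} \circ (\alpha_A)^* = (d^G_A)^{-1}$, equivalently to compatibility with the \emph{units}, $(\alpha_A \otimes \alpha_{A^*}) \circ \eta^F = \eta^G$. For non-invertible maps, $xy = \id$ does not imply $yx = \id$ --- and invertibility is the very thing being proved --- so this second identity is not a formal consequence of $(\star)$. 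If you run your ``symmetric computation'' you will see it stall: after pushing $\zeta$ through, you need $(\alpha_A)^{**} \circ ((d^F_A)^*)^{-1} \circ (\alpha_{A^*})^* \circ (d^G_A)^* = \id$, and substituting $(d^F_A)^* = (\alpha_{A^*})^* \circ (d^G_A)^* \circ (\alpha_A)^{**}$ from $(\star)$ only shows that this composite is a one-sided inverse relation, not the identity.

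Your fallback, ``apply $(\star)$ at $A^*$ and use $\zeta$,'' also fails, for a concrete reason: in a symmetric compact closed category the duality data of $A^*$ is, up to the canonical isomorphism $u_A : A \to A^{**}$ and a symmetry, that of $A$ with the factors swapped ($\varepsilon_{A^*} \circ (u_A \otimes \id_{A^*}) = \varepsilon_A \circ s$). Transporting $(\star)$ at $A^*$ along $u_A$, using naturality of $\alpha$ at $u_A$, gives back exactly $(\star)$ at $A$; so every instance of $(\star)$ produces left inverses of components of $\alpha$, never right inverses. The repair is the standard one: monoidal naturality must be invoked \emph{twice}, once at $\eta_A : I \to A \otimes A^*$ (giving unit compatibility) and once at $\varepsilon_A : A^* \otimes A \to I$ (giving counit compatibility, your $(\star)$); the counit identity yields $\beta_A \circ \alpha_A = \id_{FA}$ and the unit identity yields $\alpha_A \circ \beta_A = \id_{GA}$. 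So your concluding remark that ``the only use of monoidality is concentrated in $(\star)$'' is precisely what is false; with the companion lemma added, the rest of your bookkeeping with $(-)^*$ and $\zeta$ goes through.
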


\begin{theorem}
  $\Atemp_\C$ is the free compact closed category on a symmetric
  monoidal category $\C$. That is, $\Atemp : \SymMon_g \to \Comp$
  assembles into a 2-functor that is left biadjoint to the inclusion
  $\Comp \to \SymMon_g$, with the unit of the biadjunction having
  component $\iota : \C \to \Atemp_\C$ at $\C$.
\end{theorem}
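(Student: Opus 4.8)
The plan is to promote the single-object universal property just established into a biadjunction, following the standard pattern for free constructions in a 2-categorical setting. The preceding theorem already gives, for each compact closed $\D$, a factorisation of any symmetric monoidal $F : \C \to \D$ through $\iota : \C \to \Atemp_\C$ via some $\hat{F} : \Atemp_\C \to \D$. First I would upgrade this to an equivalence of hom-categories: I claim precomposition with $\iota$ induces an equivalence
\begin{align*}
  (-) \circ \iota : \Comp(\Atemp_\C, \D) \to \SymMon_g(\C, \D).
\end{align*}
Essential surjectivity is exactly the content of the factorisation theorem, so the work is in fullness and faithfulness on 2-cells, i.e.\ showing that every monoidal natural isomorphism $F\iota \Rightarrow G\iota$ extends uniquely to a monoidal natural transformation $\hat{F} \Rightarrow \hat{G}$ between the extensions.

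To construct this extension I would use the decomposition $(A, A') \cong (\iota A')^* \otimes \iota A$ that drove the definition of $\hat{F}$ on objects. Given $\alpha : F\iota \Rightarrow G\iota$, its component at $(A, A')$ is forced to be $(\alpha_{A'})^* \otimes \alpha_A$ (up to the structure isomorphisms), using that $\hat{F}(A,A') = (FA')^* \otimes FA$; here I lean on the fact that a monoidal functor between compact closed categories preserves duals canonically, so $\alpha_{A'}$ induces a map on the duals by the formula of Proposition~\ref{prop:nat-inverse}. I would then check naturality of this assembled $\alpha$ against an arbitrary learner by invoking Lemma~\ref{lem:learner-decomp}: since every morphism factors through cups, caps, and maps in the image of $\iota$, and since $\alpha$ is natural on the $\iota$-image by hypothesis and compatible with $\eta, \varepsilon$ because these are sent to the canonical cups and caps in $\D$, naturality on all of $\Atemp_\C$ follows. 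Uniqueness is immediate because $\iota$ is (essentially) surjective on the generating data, so any extension is pinned down on objects and on the decomposed morphisms.

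With the hom-equivalence in hand, the remaining steps are largely formal. I would verify that $\Atemp$ is functorial on 1-cells and 2-cells of $\SymMon_g$ (a symmetric monoidal $G : \C \to \C'$ induces $\Atemp_G : \Atemp_\C \to \Atemp_{\C'}$ by $\widehat{\iota_{\C'} G}$, and monoidal natural isomorphisms transport similarly), assembling $\Atemp$ into a 2-functor. The biadjunction then follows from the equivalence of hom-categories being pseudonatural in $\D$, with unit $\iota$; Proposition~\ref{prop:nat-inverse} is what lets us work with $\Comp$ having \emph{all} monoidal natural transformations as 2-cells rather than only isomorphisms, since it guarantees invertibility for free and hence reconciles the target 2-category of $\Comp$ with the isomorphism-only 2-cells of $\SymMon_g$.

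I expect the main obstacle to be the fullness-on-2-cells step: verifying that the forced component $(\alpha_{A'})^* \otimes \alpha_A$ is genuinely natural with respect to \emph{every} learner, not merely those in the image of $\iota$. This is where the compact-closed structure of $\Atemp_\C$ must be used in full — one has to push $\alpha$ past the cups and caps and confirm that the snake-equation quotient makes the two sides agree. The diagrammatic bookkeeping here parallels the naturality argument in Proposition~\ref{prop:atemp-closed}, and I would reuse the decomposition diagrams from Lemma~\ref{lem:learner-decomp} to organise it, reducing the general case to the already-checked behaviour of $\alpha$ on $\eta$, $\varepsilon$, and $\iota$-morphisms.
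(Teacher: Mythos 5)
Your skeleton is the same as the paper's: exhibit restriction along $\iota$ as an equivalence of hom-categories $\Comp(\Atemp_\C, \D) \simeq \SymMon_g(\C, \D)$, so that $\iota$ is a biuniversal arrow, and then obtain the biadjunction (where you propose to check 2-functoriality of $\Atemp$ and pseudonaturality by hand, the paper instead invokes a general theorem of Fiore that biuniversal arrows automatically assemble into a left biadjoint). The extension of a 2-cell via the decomposition $(A, A') \cong \iota A \otimes (\iota A')^*$ and the verification of its naturality via Lemma~\ref{lem:learner-decomp} also match the paper. However, two of your steps have genuine problems, and they sit exactly at the subtle point of this theorem.

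First, the component $(\alpha_{A'})^* \otimes \alpha_A$ points the wrong way: dualisation is contravariant, so $(\alpha_{A'})^*$ is a map $(G\iota A')^* \to (F\iota A')^*$ and cannot be a constituent of a component of a transformation from the $F$-side to the $G$-side. The correct component is $\alpha_A \otimes (\alpha_{A'}^{-1})^*$, whose existence depends on $\alpha$ being invertible, i.e., on the 2-cells of $\SymMon_g$ being isomorphisms; this is precisely why the theorem restricts to $\SymMon_g$ (see the paper's closing remark on the corresponding error in the traced-category literature). Your attempted repair via Proposition~\ref{prop:nat-inverse} does not work here: that proposition's formula evaluates the transformation at \emph{dual objects of the domain category}, and the domain of $\alpha : F\iota \Rightarrow G\iota$ is $\C$, which is merely symmetric monoidal and has no duals. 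Second, your claim that uniqueness is ``immediate because $\iota$ is (essentially) surjective on the generating data'' dismisses what the paper calls the interesting case. Objects $(A, A')$ are not in the image of $\iota$; they involve a dual $(\iota A')^*$, and the component of a natural transformation $F \Rightarrow G$ at a dual object is not pinned down by naturality and monoidality alone. Showing $\overline{\alpha\iota} = \alpha$ --- equivalently, injectivity of the restriction map on 2-cells --- requires the explicit computation in the paper: expand $(\alpha_{\iota A'}^{-1})^*$ using Proposition~\ref{prop:nat-inverse} (which \emph{does} apply there, since $F$ and $G$ are functors between compact closed categories), cancel the double duals, and use monoidalness of $\alpha$. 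Without that computation the hom-category equivalence, and hence the biadjunction, is not established.
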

\begin{proof}
  Suppose $\D$ is compact closed. For two symmetric monoidal functors
  $F, G : \Atemp_\C \to \D$, the restriction map along $\iota_\C$
  gives a map
  \begin{align*}
    \Comp(\Atemp_\C, \D)(F, G) \to \SymMon_g(\C, \D)(F \circ \iota, G \circ \iota)
  \end{align*}
  We claim that this restriction map is a bijection.

  Given a monoidal isomorphism
  $\beta : F \circ \iota \to G \circ \iota$, define the monoidal
  isomorphism $\overline{\beta} : F \to G$ whose component at
  $(A, A')$ is
  \begin{align*}
    F(A, A')
    &= F(\iota A \otimes (\iota A')^*)
    \to F\iota A \otimes F(\iota A'^*) \\
    &\to F\iota A \otimes (F\iota A')^*
    \xrightarrow{\beta_A \otimes (\beta^{-1}_{A'})^*} G\iota A
      \otimes (G\iota A')^* \\
    &\to G\iota A \otimes G(\iota A'^*)
    \to G(\iota A \otimes \iota A'^*)
    = G(A, A')
  \end{align*}
  Eliding the structural morphisms, this corresponds simply to the
  following diagram in $\D$:
  \begin{center}
    \tikzfig{diagrams/beta-definition}
  \end{center}
  Monoidalness of $\overline{\beta}$ follows immediately from the
  monoidalness of $\beta$.  For naturality, suppose we have a morphism
  $(l \mid r) : (A, A') \to (B, B')$ in $\Atemp_\C$. Naturality of
  $\overline{\beta}$ with respect to $(l \mid r)$ corresponds to the
  equivalence of the following diagrams:
  \begin{center}
    \tikzfig{diagrams/beta-natural1}
    \quad $=$ \quad
    \tikzfig{diagrams/beta-natural2}
  \end{center}
  using monoidalness and naturality of $\beta$ to pass it through $l$
  and $r$. The extraneous $\beta_P$ and $\beta_Q$ are cancelled with
  their inverses by passing them around the cup and cap.

  Finally, we must show that this process is inverse to restriction
  along $\iota$. It is clear from the definition that
  $\overline{\beta}\iota = \beta$. The interesting case is the
  converse, that $\overline{\alpha\iota} = \alpha$ for a monoidal
  isomorphism $\alpha : F \to G$. We verify
  \begin{align*}
    &F(A, A') \xrightarrow{\overline{\alpha\iota}_{(A, A')}} G(A, A') \\
    &= F(A, A') \xrightarrow{\sim} F\iota A \otimes (F\iota A')^*
      \xrightarrow{\alpha\iota_{A} \otimes (\alpha\iota_{A'}{}^{-1})^*} G\iota A \otimes
      (G\iota A')^* \xrightarrow{\sim} G(A, A')\\
    &= F(A, A') \xrightarrow{\sim} F\iota A \otimes (F\iota A')^*
      \xrightarrow{\alpha_{\iota A} \otimes (\alpha_{\iota A'}{}^{-1})^*} G\iota A \otimes
      (G\iota A')^* \xrightarrow{\sim} G(A, A')\\
    &= F(A, A') \xrightarrow{\sim} F\iota A \otimes (F\iota A')^*
      \xrightarrow{\sim} F\iota A \otimes (F(\iota A')^*)^{**}
      \xrightarrow{\alpha_{\iota A} \otimes (\alpha_{(\iota
      A')^*})^{**}}G\iota A \otimes (G(\iota A')^*)^{**} \\
      &\qquad \xrightarrow{\sim} G\iota A \otimes (G\iota A')^*
      \xrightarrow{\sim} G(A, A') \\
    &= F(A, A') \xrightarrow{\sim} F\iota A \otimes (F(\iota A')^*)
      \xrightarrow{\alpha_{\iota A} \otimes (\alpha_{(\iota A')^*})} G\iota A \otimes
      (G\iota A')^* \xrightarrow{\sim} G(A, A')\\
    &= F(A, A') \xrightarrow{\alpha_{(A, A')}} G(A, A')
  \end{align*}
  by expanding the explicit description of the inverse of
  $\alpha_{\iota A'}$ given by Proposition~\ref{prop:nat-inverse}, and using
  the monoidalness of $\alpha$ at the last step.

  From this we conclude that restriction along the inclusion $\iota$ is an
  equivalence of categories between $\Comp(\Atemp_\C, \D)$ and
  $\SymMon_g(\C, \D)$, making $\iota$ a \emph{biuniversal arrow}. It
  follows abstractly from~\cite[Theorem 9.17]{fiore:biadjoints} that
  $\Atemp$ assembles into a bifunctor $\Atemp : \SymMon_g \to \Comp$,
  left biadjoint to the inclusion.
\end{proof}

\begin{remark}
  The restriction to $\SymMon_g$ with natural \emph{isomorphisms} as 2-cells
  is necessary. As pointed out by~\cite{hk:correction}, this
  assumption is erroneously missing from the similar theorem given
  in~\cite{jsv:traced}.
\end{remark}


\bibliographystyle{eptcs}
\bibliography{optics-bibtex}
\end{document}